\newcommand{\bR}{\mathbb R}
\newcommand{\bC}{\mathbb C}
\newcommand{\bN}{\mathbb N}
\renewcommand{\Re}{\mathop\mathrm{Re}\nolimits}
\renewcommand{\Im}{\mathop\mathrm{Im}\nolimits}
\newcommand{\Int}[1]{\mathop\mathrm{Int}\nolimits #1}
\renewcommand{\emptyset}{\varnothing}
\newtheorem{theorem}{Theorem}
\newtheorem{lemma}{Lemma}
\newtheorem{prop}{Proposition}
\newtheorem{cor}{Corollary}
\newtheorem{defn}{Definition}
\title{On conjugate pseudo-harmonic functions.}
\author{Yevgen Polulyakh}
\thanks{Institute of mathematics of NAS of Ukraine, \textsl{e-mail: polulyah@imath.kiev.ua}}
\begin{document}
\maketitle

\begin{abstract}
We prove the following theorem. Let $U$ be a pseudo-harmonic function on a surface $M^2$. For a real valued continuous function $V : M^2 \rightarrow \bR$ to be a conjugate pseudo-harmonic function of $U$ on $M^2$ it is necessary and sufficient that $V$ is open on level sets of $U$.
\end{abstract}

\bigskip

\begin{center}
\parbox{0.8\textwidth}%
{
	\begin{footnotesize}
	Keywords: \textsl{a pseudo-harmonic function, a conjugate, a surface, an interior transformation}
	\end{footnotesize}
}
\end{center}

\vspace{3ex}

Let $M^2$ be a surface, i.e. a 2-dimensional and separable manifold, $U : M^2 \rightarrow \bR$ be a real-valued function on $M^2$. Denote also by
$$
D = \{ (x, y) \in \bR^2 \,|\, x^2 + y^2 < 1 \}
$$
the open unit disk in the plane.

\begin{defn}[see~\cite{Toki,Morse}]\label{defn_pseudo_harm_loc}
A function $U$ is called \emph{pseudo-harmonic in a point $p \in M^2$} if there exist a neighbourhood $N$ of $p$ on $M^2$ and a homeomorphism $T : D \rightarrow N$ such that $T(0, 0) = p$ and a function
$$
u = U \circ T : D \rightarrow \bR^2
$$
is harmonic and not identically constant.

A neighbourhood $N$ is called \emph{simple neighbourhood of $p$}.
\end{defn}

We can even choose $N$ and $T$ from previous definition to comply with the equality
$$
u(z) = U \circ T(z) = \Re z^n + U(p) \,, \quad z = x + iy \in D \,,
$$
for a certain $n = n(p) \in \bN$ (see~\cite{Morse}).

\begin{defn}[see~\cite{Toki,Morse}]\label{defn_pseudo_harm_glob}
A function $U$ is called \emph{pseudo-harmonic on $M^2$} if it is pseudo-harmonic in each point $p \in M^2$.
\end{defn}

Let $U : M^2 \rightarrow \bR$ be a pseudo-harmonic function on $M^2$ and $V : M^2 \rightarrow \bR$ be a real valued function.

\begin{defn}[see~\cite{Toki}]\label{defn_conjugate_loc}
A function $V$ is called a \emph{conjugate pseudo-harmonic function of $U$ in a point $p \in M^2$} if there exist a neighbourhood $N$ of $p$ on $M^2$ and a homeomorphism $T : D \rightarrow N$ such that $T(0, 0) = p$ and
$$
u = U \circ T : D \rightarrow \bR^2
\quad \mbox{and} \quad
v = V \circ T : D \rightarrow \bR^2
$$
are conjugate harmonic functions.
\end{defn}

We can choose $N$ and $T$ from previous definition in such way that
\begin{align*}
u(z) & = U \circ T(z) = \Re z^n + U(p) \,, \\
v(z) & = V \circ T(z) = \Im z^n + V(p) \,, \quad z = x + iy \in D \,,
\end{align*}
for a certain $n = n(p) \in \bN$ (see~\cite{Morse}).

\begin{defn}[see~\cite{Toki}]\label{defn_conjugate_glob}
A function $V$ is called a \emph{conjugate pseudo-harmonic function of $U$ on $M^2$} if it is a conjugate pseudo-harmonic function of $U$ in every $p \in M^2$.
\end{defn}

\begin{defn}\label{defn_open_on_levelsets}
Let $U$ and $V$ be continuous real valued functions on a surface $M^2$. We say that \emph{$V$ is open on level sets of $U$} if for every $c \in U(M^2)$ a mapping
$$
V |_{U^{-1}(c)} : U^{-1}(c) \rightarrow \bR
$$
is open on the space $U^{-1}(c)$ in the topology induced from $M^2$.
\end{defn}

\begin{theorem}\label{th_conjugace_conditions}
Let $U$ be a pseudo-harmonic function on $M^2$. For a real valued continuous function $V : M^2 \rightarrow \bR$ to be a conjugate pseudo-harmonic function of $U$ on $M^2$ it is necessary and sufficient that $V$ is open on level sets of $U$.
\end{theorem}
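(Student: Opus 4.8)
The plan is to verify both implications locally, since each is a pointwise-checkable condition: conjugacy is defined point by point, and the openness of $V|_{U^{-1}(c)}$ is equivalent to local openness at each point of each level set, a property depending only on an arbitrarily small neighbourhood. Thus it suffices to work in a simple neighbourhood $N_0$ of a fixed point $p$ supplied by pseudo-harmonicity, with a homeomorphism $T_0 : D \to N_0$ normalised so that $u = U\circ T_0 = \Re z^n + U(p)$; after subtracting constants I may assume $U(p)=V(p)=0$ and set $g = V\circ T_0 : D \to \bR$, a continuous function. In these coordinates the set $U^{-1}(c)\cap N_0$ corresponds to $\{\Re z^n = c\}\cap D$, and conjugacy at $p$ amounts to finding a self-homeomorphism of $D$ carrying the pair $(\Re z^n, g)$ to $(\Re z^n, \Im z^n)$.

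For necessity, suppose $V$ is conjugate and fix $q$ on a level set $U^{-1}(c)$; choose the conjugate chart at $q$, so that on it $V$ corresponds to $\Im z^n + V(q)$ and $U^{-1}(c)$ to $\{\Re z^n = 0\}$. Openness of $V|_{U^{-1}(c)}$ at $q$ then reduces to the model assertion that $\Im z^n$ is open on $\{\Re z^n = 0\}$ at the origin. Away from the origin $z\mapsto z^n$ is a local homeomorphism and this is immediate; at the origin the level set is the star of $2n$ rays on which $\Im z^n$ takes both signs and vanishes only at $0$, so every sub-star maps onto an interval about $0$. Hence $V|_{U^{-1}(c)}$ is open at every point.

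For sufficiency I would show that $F = (U,V) : M^2 \to \bR^2$ is an \emph{interior transformation} near $p$, i.e. continuous, light and open, and then read off the conjugate normal form from Stoïlow's structure theorem. Lightness is easy: on each branch (arc) of a level set of $U$ the function $V$ is continuous and open, hence strictly monotone, so each fibre $F^{-1}(c,d) = U^{-1}(c)\cap V^{-1}(d)$ meets every branch in at most one point and is therefore discrete. Openness of $F$ at a regular point is also quick: there $z\mapsto z^n$ is a local homeomorphism, so in suitable coordinates $F\circ T_0 = (x, g(x,y))$ with $y\mapsto g(x,y)$ strictly monotone for each $x$; this map is injective, and by invariance of domain it is open.

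The real difficulty is openness of $F$ at the branch point $z=0$ (for $n\ge 2$), where $F$ is $n$-to-one and invariance of domain does not apply. Here I would first determine the combinatorics of $g$ on the critical star $\{\Re z^n = 0\}$: on each of its $2n$ rays $g$ is strictly monotone away from $0$, hence carries a definite sign, and I claim these signs alternate around $0$ exactly as those of $\Im z^n$ do. This is forced by openness on the \emph{nearby} level curves: for small $a\neq 0$ a branch of $\{\Re z^n = a\}$ runs from near one ray, close past the origin, out to the next, and on it $g$ must be strictly monotone while taking a value near $g(0)=0$ in the middle, which is impossible if the two bounding rays carry the same sign. With the alternating pattern in hand, the restriction of $(\Re z^n, g)$ to a small circle about $0$ winds $n$ times around $(0,0)$, so a degree argument shows the image covers a full neighbourhood and $F$ is open there. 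Granting that $F$ is an interior transformation, Stoïlow's theorem puts $F$ in the form $z\mapsto z^m$; matching first coordinates with $u=\Re z^n$ forces $m=n$ and identifies the level structure, and the transversal monotonicity of $g$ lets me straighten the second coordinate to $\Im z^n$ without disturbing the first, producing the conjugate chart $T$. I expect the main obstacle to be precisely this branch-point analysis — proving openness of $F$ at $z=0$ and carrying out the straightening continuously across the critical level $a=0$, where the reparametrisations on the individual branches must be glued into a single homeomorphism of $D$.
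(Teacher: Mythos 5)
Your proposal is correct. On sufficiency it follows the paper's skeleton: prove that $F=(U,V)$ is an interior map and invoke Stoilow. Your central technical step --- the signs of $V$ on the $2n$ rays of the critical star must alternate, because on a nearby level curve passing close to the origin $V$ would otherwise have to be strictly monotone yet take a value near $V(p)$ between two values of the same sign bounded away from it --- is precisely the paper's argument inside Case~2 of the proof of Lemma~\ref{lemma_interior_mapping}, resting on Proposition~\ref{prop_monotone} (continuous $+$ open on an arc implies strictly monotone). You differ in the tools used to convert this into openness of $F$: invariance of domain at regular points and a winding-number argument at branch points, where the paper instead builds explicit $\varepsilon$--$\delta$ rectangles sector by sector; both are sound (note only that the winding number may be $-n$ rather than $n$ if the alternation pattern of $g$ is opposite to that of $\Im z^n$ --- a nonzero degree is all you need). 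Your necessity proof is genuinely different from, and simpler than, the paper's: you check openness directly in the conjugate normal-form chart, where it reduces to the elementary statement that $\Im z^n$ is open on the star $\{\Re z^n=0\}$ at the origin; the paper instead argues by contradiction, extracting a local extremum of $V$ on a level set (via local arcwise connectedness of level sets) and contradicting openness of the holomorphic map $u+iv$.

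One structural remark: the step you flag as the main obstacle --- straightening $g$ to $\Im z^n$ in the original chart and gluing reparametrisations across the critical level --- is not needed at all. Definition~\ref{defn_conjugate_loc} requires only \emph{some} homeomorphism $T$ for which $U\circ T$ and $V\circ T$ are conjugate harmonic; it need not be compatible with the pseudo-harmonic chart $T_0$ you fixed at the start. Stoilow's local structure theorem for interior maps already produces such a $T$: there is a homeomorphism $h$ of a neighbourhood of $p$ onto $D$ with $h(p)=0$ and $F\circ h^{-1}(z)=F(p)+z^{m}$, so that $U\circ h^{-1}=\Re z^{m}+U(p)$ and $V\circ h^{-1}=\Im z^{m}+V(p)$ are conjugate harmonic outright; taking $T=h^{-1}$ finishes the proof, with no matching of $m$ against $n$ and no gluing. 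The paper reaches the same conclusion by a slightly different route, applying the global Stoilow theorem to get a complex structure on $D$ in which $F\circ T$ is holomorphic and then uniformization to pass back to the standard structure; on either route the difficulty you anticipated dissolves.
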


Let us remind following
\begin{defn}[see~\cite{Stoilov}]\label{defn_interior_mapping}
A mapping $G : M^2_1 \rightarrow M^2_2$ of a surface $M^2_1$ to a surface $M^2_2$ is called \emph{interior} if it complies with conditions:
\begin{itemize}
	\item[1)] $G$ is open, i. e. an image of any open subset of $M^2_1$ is open in $M^2_2$;
	\item[2)] for every $p \in M^2_2$ its full preimage $G^{-1}(p)$ does not contain any nondegenerate continuum (closed connected subset of $M^2_1$).
\end{itemize}

\end{defn}

In order to prove theorem~\ref{th_conjugace_conditions} we need following

\begin{lemma}\label{lemma_interior_mapping}
Let $U$ be a pseudo-harmonic function on $M^2$. Let a real valued continuous function $V$ be open on level sets of $U$.

Then the mapping $F : M^2 \rightarrow \bC$,
$$
F(p) = U(p) + i V(p) \,, \quad p \in M^2
$$
is interior.
\end{lemma}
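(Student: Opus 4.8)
The plan is to check, in turn, the two conditions of Definition~\ref{defn_interior_mapping} for $F=U+iV$, working in a simple neighbourhood $N=T(D)$ of an arbitrary point $p$ where, by the normal form recorded after Definition~\ref{defn_pseudo_harm_loc}, we may assume $U\circ T(z)=\Re z^{n}+U(p)$ with $n=n(p)\in\bN$; after subtracting constants we may further assume $U(p)=V(p)=0$. In these coordinates the level sets of $U$ are the sets $\{\Re z^{n}=\mathrm{const}\}$, and the single fact I will use repeatedly is that a continuous map of an open interval into $\bR$ which is open must be strictly monotone, hence injective. Since $V$ is open on $U^{-1}(c)$ and each arc of $\{\Re z^{n}=c\}$ lying in $N$ is, away from the branch point $z=0$, a relatively open $1$-manifold inside $U^{-1}(c)$, the restriction of $V$ to every such arc is strictly monotone.

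For condition~2 I note first that in the normal form the only possible critical (branch) point of $U$ in $N$ is $p=T(0)$, so the critical points of $U$ form a discrete set; consequently any nondegenerate continuum $K\subseteq F^{-1}(q)$ must contain a regular point $x_{0}$, near which $U^{-1}(\Re q)$ is a single arc $\gamma$ that is a neighbourhood of $x_{0}$ in $U^{-1}(\Re q)$. On $\gamma$ the function $V$ is injective by the monotonicity above, while $V\equiv\Im q$ on $K$; hence $K\cap\gamma=\{x_{0}\}$, so $x_{0}$ is isolated in $K$, contradicting connectedness of the nondegenerate $K$. Thus no fibre of $F$ contains a nondegenerate continuum.

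For condition~1 I will prove openness at each $p$ by a degree argument. Along each of the $2n$ edges of the star $\{\Re z^{n}=0\}$ the function $v=V\circ T$ is strictly monotone with $v\to 0$ at the vertex, so $v\neq0$ on the punctured edges; hence $F^{-1}(0)\cap N=\{p\}$ and, for every $0<\rho<1$, the loop $\Gamma_{\rho}=F\circ T(\{|z|=\rho\})$ misses $0=F(p)$. This loop is $\theta\mapsto(\rho^{n}\cos n\theta,\,v(\rho e^{i\theta}))$; it crosses the imaginary axis exactly at the $2n$ rays $\theta_{j}$, at the heights $h_{j}=v(\rho e^{i\theta_{j}})\neq0$. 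The heart of the matter is to show that the signs of the $h_{j}$ \emph{alternate} around the circle. Fix a positive sector bounded by consecutive rays $e_{k},e_{k+1}$ and look at the level arc $\beta_{c}=\{\Re z^{n}=c\}$, $c>0$ small, inside it; its two ends at radius $\rho$ tend to $\rho e^{i\theta_{k}}$ and $\rho e^{i\theta_{k+1}}$ as $c\to0$, while its turning point tends to the vertex, where $v\to0$. Since $v$ is strictly monotone along $\beta_{c}$, the two end–values lie on opposite sides of the vanishing value at the turning point; passing to the limit and using $h_{j}\neq0$ gives opposite signs on $e_{k}$ and $e_{k+1}$. As every pair of adjacent rays bounds such a sector, the $h_{j}$ alternate.

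Alternation of the heights then lets me build an explicit homotopy, rel the crossing points of the imaginary axis, from $\Gamma_{\rho}$ to the model loop $\theta\mapsto \rho^{n}\cos n\theta\pm i\sin n\theta$ inside $\bC\setminus\{0\}$, whence the winding number of $\Gamma_{\rho}$ about $0$ equals $\pm n\neq0$. By the usual degree argument $F\circ T(\{|z|\le\rho\})$ contains a full neighbourhood of $F(p)$; letting $\rho$ shrink shows $F$ is open at $p$, and as $p$ is arbitrary, $F$ is open. I expect the sign–alternation step to be the main obstacle: unlike lightness, which is immediate from monotonicity along a single arc, establishing the alternating pattern forces one to control $V$ simultaneously on all $2n$ branches meeting at a critical point and to run the limiting argument along the shrinking level arcs, whereas everything else (the monotonicity fact, isolatedness of the fibre, and the passage from nonzero winding number to openness) is routine.
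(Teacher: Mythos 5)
Your proof is correct, and it passes through the same central difficulty as the paper while finishing it off with different machinery. Both arguments rest on the same two pillars: (i) openness of $V$ on level sets forces strict monotonicity of $V$ along every relatively open level arc (your ``standard fact'' is precisely the paper's Proposition~\ref{prop_monotone}), and (ii) sign alternation of $v$ on the $2n$ rays meeting a branch point. Your limiting argument for (ii) along the shrinking level arcs $\beta_c$ is the same mechanism as the paper's, which fixes $\hat\varepsilon,\hat\delta$ and contradicts monotonicity along a single arc $\alpha_c$ with $0<c<\hat\delta$; your version needs two small repairs: the orientation of monotonicity along $\beta_c$ could a priori depend on $c$ (pass to a subsequence before taking the limit), and adjacent rays alternately bound positive and negative sectors, so the argument must also be run with $c<0$. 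The genuine divergence comes after (ii). The paper stays elementary: it splits $D$ into the $2n$ sectors $R_k$, maps each homeomorphically onto a half-disk by $z\mapsto \pm z^n$, deduces from alternation that $v_k$ is strictly monotone on the diameter $\alpha_0$, and then repeats the regular-point argument (intermediate value theorem on vertical segments) on each half-disk, so that the images of the sets $\hat{W}_k$ jointly contain an explicit rectangle $(-\delta,\delta)\times(-\varepsilon,\varepsilon)$ around $F(p)$. You instead invoke degree theory: alternation forces winding number $\pm n\neq 0$ for the image under $F\circ T$ of a small circle, and nonvanishing degree yields a neighbourhood of $F(p)$ in the image. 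Your route is shorter and treats $n=1$ and $n>1$ uniformly, at the price of homotopy/winding-number machinery; the paper's is longer but needs nothing beyond the IVT and produces the neighbourhood explicitly. Finally, for condition 2) of Definition~\ref{defn_interior_mapping} the paper proves slightly more --- every point is isolated in its $F$-fibre, saddle points included --- whereas you combine the regular-point case with discreteness of the branch points to show that a nondegenerate continuum in a fibre would have an isolated point; both arguments are sound.
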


First we will verify one auxiliary statement. Denote $I = [0,1]$, $\mathring{I} = (0,1) = I \setminus \{0,1\}$.

\begin{prop}\label{prop_monotone}
In the condition of Lemma~\ref{lemma_interior_mapping} the following statement holds true.

Let $\gamma : I \rightarrow M^2$ be a simple continuous curve and $\gamma(I) \subseteq U^{-1}(c)$ for a certain $c \in \bR$. If the set $\gamma(\mathring{I})$ is open in $U^{-1}(c)$ in the topology induced from $M^2$, then the function $V \circ \gamma : I \rightarrow \bR$ is strictly monotone.
\end{prop}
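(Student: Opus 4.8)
The plan is to reduce the proposition to a purely topological fact about continuous open maps of an interval into the line. Write $f = V \circ \gamma : I \rightarrow \bR$; this is continuous since both $V$ and $\gamma$ are. Since $I$ is compact and $M^2$ is Hausdorff, the injective continuous map $\gamma$ is a homeomorphism onto its image, so its restriction $\gamma|_{\mathring I} : \mathring I \rightarrow \gamma(\mathring I)$ is a homeomorphism onto $\gamma(\mathring I)$ equipped with the topology induced from $M^2$ (equivalently, from $U^{-1}(c)$).

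First I would show that $f|_{\mathring I} : \mathring I \rightarrow \bR$ is an open map. Let $J \subseteq \mathring I$ be open. Then $\gamma(J)$ is open in $\gamma(\mathring I)$, and since $\gamma(\mathring I)$ is by hypothesis open in $U^{-1}(c)$, the set $\gamma(J)$ is open in $U^{-1}(c)$. Because $V$ is open on level sets of $U$, the map $V|_{U^{-1}(c)}$ is open, hence $f(J) = V(\gamma(J))$ is open in $\bR$. Thus $f|_{\mathring I}$ carries open subsets of $\mathring I$ to open subsets of $\bR$.

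The heart of the argument, and the step I expect to be the main obstacle, is to deduce that a continuous open map $g = f|_{\mathring I}$ of the interval $\mathring I$ into $\bR$ is strictly monotone. I would argue that $g$ can have no interior local extremum: if $g$ attained a local maximum at some $\xi$, then a small neighbourhood $(\xi-\delta,\xi+\delta)$ would map into $(-\infty, g(\xi)]$ with $g(\xi)$ itself in the image, so $g\bigl((\xi-\delta,\xi+\delta)\bigr)$ could not be open, contradicting openness; the case of a local minimum is symmetric, and openness likewise forbids $g$ from being constant on any subinterval. From the absence of interior local extrema one concludes that $g$ is injective: if $g(s)=g(t)$ for $s<t$, then $g$ restricted to $[s,t]$ would attain its maximum or its minimum at an interior point, producing an interior local extremum of $g$. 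Since a continuous injective function on an interval is strictly monotone, this gives strict monotonicity of $g$ on $\mathring I$.

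Finally I would upgrade strict monotonicity on $\mathring I$ to strict monotonicity on the whole of $I$ using continuity of $f$ at the endpoints $0$ and $1$: the limiting values $f(0)$ and $f(1)$ respect the inequalities, and a short argument by contradiction rules out equalities $f(0)=f(s)$ or $f(1)=f(s)$ for interior $s$, so $V\circ\gamma$ is strictly monotone on all of $I$. Note that this argument uses only the continuity of $V$, its openness on the level set $U^{-1}(c)$, and the openness of $\gamma(\mathring I)$ in $U^{-1}(c)$; the pseudo-harmonicity of $U$ is not invoked directly.
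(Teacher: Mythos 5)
Your proof is correct, and its core mechanism is the same as the paper's: two equal values of $V\circ\gamma$ force either local constancy (a one-point image) or an interior extremum (the extremal value is a frontier point of the image), and either contradicts openness. The difference is in the decomposition. The paper argues directly by contradiction on $V\circ\gamma$: assuming $V\circ\gamma(\tau_1)=V\circ\gamma(\tau_2)$ with $\tau_1<\tau_2$, it takes the minimum and maximum on $[\tau_1,\tau_2]$, sets $W=(\tau_1,\tau_2)$, and disposes of both cases at once; since $(\tau_1,\tau_2)\subseteq\mathring{I}$ even when $\tau_1=0$ or $\tau_2=1$, the endpoints are handled uniformly and no separate limiting argument is needed. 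You instead factor the proof through two clean intermediate statements: (i) $V\circ\gamma|_{\mathring{I}}$ is an open map into $\bR$, where you make explicit what the paper leaves implicit, namely that $\gamma$ is an embedding (continuous injection from compact $I$ into Hausdorff $M^2$), so that $\gamma(J)$ is open in $U^{-1}(c)$ for open $J\subseteq\mathring{I}$ --- this is exactly what the paper's phrase ``the open subset $\gamma(W)$ of the level set'' silently relies on; and (ii) the purely topological lemma that a continuous open map of an interval into $\bR$ is strictly monotone. This modularity buys clarity and isolates the only place the hypotheses are used, but it costs you the extra endpoint-extension step, which the paper's choice of $W$ avoids entirely. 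Your endpoint argument does go through: if $f$ is strictly increasing on $(0,1)$ and continuous at $0$, then $f(0)=\inf_{(0,1)}f$, which is strictly below every interior value, and symmetrically at $1$; so the whole proposal is sound.
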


\begin{proof}
Suppose that contrary to the statement of Proposition the equality $V \circ \gamma(\tau_1) = V \circ \gamma(\tau_2)$ is valid for certain $\tau_1, \tau_2 \in I$, $\tau_1 < \tau_2$.

Since the function $V \circ \gamma$ is continuous and a set $[\tau_1, \tau_2]$ is compact, then following values
\begin{align*}
d_1 & = \min_{t \in [\tau_1, \tau_2]} V \circ \gamma(t) \,,\\
d_2 & = \max_{t \in [\tau_1, \tau_2]} V \circ \gamma(t) \,,
\end{align*}
are well defined. Let us fix $s_1, s_2 \in [\tau_1, \tau_2]$ such that $d_i = V \circ \gamma(s_i)$, $i = 1,2$.

We designate $W = (\tau_1, \tau_2)$. It is obviously the open subset of $\mathring{I}$.

Let us consider first the case $d_1 = d_2$. It is clear that $[\tau_1, \tau_2] \subseteq (V \circ \gamma)^{-1}(d_1)$ in this case. So the open subset $\gamma(W)$ of the level set $U^{-1}(c)$ is mapped by $V$ onto a one-point set $\{d_1\}$ which is not open in $\bR$ and $V$ is not open on level sets of $U$.

Assume now that $d_1 \neq d_2$. Since $V \circ \gamma(\tau_1) = V \circ \gamma(\tau_2)$ due to our previous supposition, then either $s_1$ or $s_2$ is contained in $W$.

Let $s_1 \in W$ (the case $s_2 \in W$ is considered similarly). Then $V \circ \gamma(W) \subseteq [d_1, +\infty)$ and the open subset $\gamma(W)$ of the level set $U^{-1}(c)$ can not be mapped by $V$ to an open subset of $\bR$ since its image containes the frontier point $d_1 = V \circ \gamma(s_1)$. So, in this case $V$ is not open on level sets of $U$.

The contradiction obtained shows that our initial supposition is false and the function $V \circ \gamma$ is strictly monotone on $I$.
\end{proof}

\begin{proof}[Proof of Lemma~\ref{lemma_interior_mapping}]
Let $p \in M^2$ and $Q$ be an open neighbourhood of $p$.

We are going to show that the set $F(Q)$ containes a neigbourhood of $F(p)$. At the same time we shall show that $p$ is an isolated point of a level set $F^{-1}(F(p))$.

Without loss of generality we can assume that $U(p) = V(p) = 0$.

Let $N$ be a simple neighbourhood of $p$ and $T : D \rightarrow N$ be a homeomorphism such that for a certain $n \in \bN$ the folloving equality holds true $u(z) = U \circ T(z) = \Re z^n$, $z \in D$ (see Definition~\ref{defn_pseudo_harm_loc} and the subsequent remark). It is clear that without losing generality we can regard that $N$ is small enough to be contained in $Q$.

Observe that for an arbitrary level set $\Gamma$ of $U$ an intersection $\Gamma \cap T(D) = \Gamma \cap N$ is open in $\Gamma$. Consequently, since $T$ is homeomorphism then a mapping $v = V \circ T : D \rightarrow \bR$ is open on level sets of $u = U \circ T : D \rightarrow \bR$ (see Definition~\ref{defn_open_on_levelsets}).

Let us consider two possibilities.

\medskip

{\bf Case 1.} Zero is a regular point of the smooth function $u = U \circ T$, i. e. $n = 1$ and $u(z) = \Re z$, $z \in D$.

In this case $u^{-1}(u(0)) = u^{-1}(U(p)) = T^{-1}(U^{-1}(U(p))) = \{0\} \times (-1, 1)$. According to Proposition~\ref{prop_monotone} the function $v$ is strictly monotone on every segment which is contained in this interval, so it is strictly monotone on $\{0\} \times (-1, 1)$. Consequently, for points $z_1 = 0-i/2$ and $z_2 = 0+i/2$ the following inequality holds true $v(z_1) \cdot v(z_2) < 0$.

Let us note that from previous it follows that $V$ is monotone on the arc $\beta = T(\{0\} \times (-1, 1)) = U^{-1}(U(p)) \cap N$. And since $F^{-1}(F(p)) \cap N \subset \beta$ then $F^{-1}(F(p)) \cap N = \{p\}$ and $p$ is an isolated point of its level set $F^{-1}(F(p))$.

Let $d_1 = v(z_1) < 0$ and $d_2 = v(z_2) > 0$ (The case $d_1 > 0$ and $d_2 < 0$ is considered similarly). Denote
$$
\varepsilon = \frac{1}{2} \min (|d_1|, |d_2|) > 0 \,.
$$
Function $v$ is continuous, so there exists $\delta > 0$ such that following implications are fulfilled
\begin{eqnarray*}
|z-z_1| < \delta & \Rightarrow & |v(z) - d_1| < \varepsilon \,,\\
|z-z_2| < \delta & \Rightarrow & |v(z) - d_2| < \varepsilon \,.
\end{eqnarray*}

\begin{figure}[htbp]
\begin{center}
\includegraphics{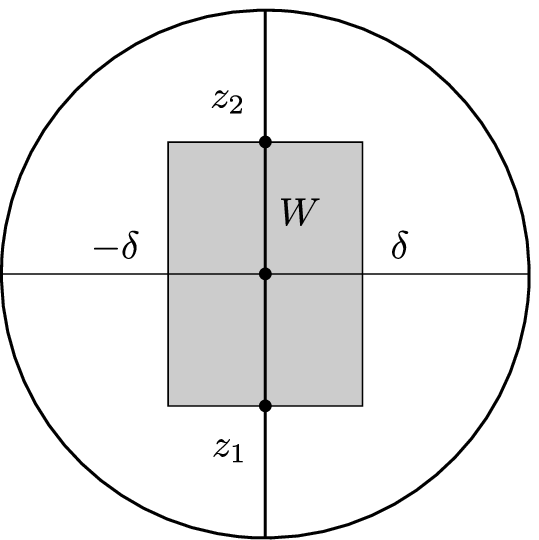}
\caption{}\label{fig_regular}
\end{center}
\end{figure}

Let us examine a neighbourhood $W = (-\delta, \delta) \times (-1/2, 1/2)$ of 0, which is depicted on Figure~\ref{fig_regular}. It can be easily seen that for every $x \in (-\delta, \delta)$ following relations are valid
\begin{gather*}
u(x+iy) = x \,, \quad y \in (-\varepsilon, \varepsilon) \,,\\
v(x-i/2) < v(z_1) + \varepsilon < -2\varepsilon + \varepsilon = -\varepsilon \,,\\
v(x+i/2) > v(z_2) - \varepsilon > 2\varepsilon - \varepsilon = \varepsilon \,.
\end{gather*}
From two last lines and from the continuity of $v$ on a segment $\{x\} \times [-1/2, 1/2]$ it follows that $v(\{x\} \times [-1/2, 1/2]) \supseteq (-\varepsilon, \varepsilon)$. Therefore
$$
F \circ T (\{x\} \times [-1/2, 1/2]) \supseteq \{x\} \times (-\varepsilon, \varepsilon) \,, \quad x \in (-\delta, \delta) \,.
$$

Since $T(W) \subseteq N \subseteq Q$ by the choise of $N$, then
$$
0 = F(p) \in (-\delta, \delta) \times (-\varepsilon, \varepsilon) \subseteq F \circ T (W) \subseteq F(Q) \,.
$$

\medskip

{\bf Case 2.} Zero is a saddle point of $u = U \circ T$, i. e. $u(z) = \Re z^n$, $z \in D$ for a certain $n > 1$.

In this case
$$
u^{-1}(u(0)) = T^{-1}(U^{-1}(U(p))) = \{0\} \cup \bigcup_{k=0}^{2n-1} \gamma_k \,,
$$
where $\gamma_k = \{ z \in D \;|\; z = a \cdot \exp(\pi i (k-1/2)/n) ,\; a \in (0,1) \}$, $k = 1, \ldots, 2n-1$.

As above, applying Proposition~\ref{prop_monotone} we conclude that function $v = V \circ T$ is strictly monotone on each arc $\gamma_k$, $k = 1, \ldots, 2n-1$. Since $v$ is continuous and $0$ is a boundary point for each $\gamma_k$, then $v(z) \neq v(0)$ for all $z \in \bigcup_{k} \gamma_k$. Therefore, $0 = (F \circ T)^{-1}(F \circ T(0))$ and $F^{-1}(F(p)) \cap N = \{p\}$, i. e. $p$ is the isolated point if its level set $F^{-1}(F(p)$.

Let us designate by
\begin{multline*}
\textstyle
R_k = \Bigl\{ z \in D \bigm| z = a e^{i \varphi},\; a \in [0, 1),\; \varphi \in \Bigl[ \frac{\pi(k-1/2)}{2}, \frac{\pi(k+1/2)}{2} \Bigr] \Bigr\} \,,\\
k = 0, \ldots, 2n-1
\end{multline*}
sectors on which disk $D$ is divided by the level set $u^{-1}(u(0))$.

We also denote
\begin{align*}
D_l & = \{ z \in D \;|\; \Re z \leq 0 \} \,,\\
D_r & = \{ z \in D \;|\; \Re z \geq 0 \} \,.
\end{align*}

Consider map $\Phi : D \rightarrow D$ given by the formula $\Phi(z) = z^n$, $z \in D$. It is easy to see that for every $k \in \{0, \ldots, 2n-1\}$ depending on its parity sector $R_k$ is mapped homeomorphically by $\Phi$ either onto $D_l$ or onto $D_r$. Let a mapping $\Phi_k : R_k \rightarrow D_r$ is given by relation
$$
\Phi_k =
\begin{cases}
	\Phi |_{R_k} , & \mbox{if } k = 2m \,,\\
	\mathrm{Inv} \circ \Phi |_{R_k} , & \mbox{if } k = 2m+1 \,,\\
\end{cases}
\quad k = 0, \ldots, 2n-1 \,,
$$
where $\mathrm{Inv} : D \rightarrow D$ is defined by formula $\mathrm{Inv}(z) = -z$, $z \in D$. Evidently, all $\Phi_k$ are homeomorphisms.

We consider now inverse mappings $\varphi_k = \Phi_k^{-1} : D_r \rightarrow D$, $k = 0, \ldots, 2n-1$. By construction all of these mappings are embeddings. Moreover, it is easy to see that
$$
u_k(z) = u \circ \varphi_k(z) =
\begin{cases}
	\Re z \,, & \mbox{when } k = 2m \,,\\
	-\Re z \,, & \mbox{when } k = 2m+1 \,.\\
\end{cases}
$$

Let us fix $k \in \{0, \ldots, 2n-1\}$. It is clear that $\varphi_k$ homeomorphically maps a domain
$$
\mathring{D}_r = \{ z \in D \;|\; \Re z > 0 \}
$$
onto a domain
$$
\textstyle
\mathring{R}_k = \Bigl\{ z \in D \bigm| z = a e^{i \varphi},\; a \in (0, 1),\; \varphi \in \Bigl( \frac{\pi(k-1/2)}{2}, \frac{\pi(k+1/2)}{2} \Bigr) \Bigr\} \,,
$$
so with the help of argument similar to the observation preceding to case 1 we conclude that the mapping $\mathring{v}_k = v \circ \varphi_k |_{\mathring{D}_r} : \mathring{D}_r \rightarrow \bR$ is open on level sets of the function $\mathring{u}_k = u \circ \varphi_k |_{\mathring{D}_r} : \mathring{D}_r \rightarrow \bR$. As above, applying Proposition~\ref{prop_monotone} we conclude that function $\mathring{v}_k$ is strictly monotone on each arc
$$
\alpha_c = \mathring{u}_k^{-1}(\mathring{u}_k(c+0i)) = \{ z \in \mathring{D}_r \;|\; \Re z = c \} \,, \quad c \in (0, 1) \,.
$$

We already know that the function $v$ is strictly monotone on the arcs $\gamma_k$ and $\gamma_s$, where $s \equiv k+1 \pmod{2n}$. Therefore the function $v_k = v \circ \varphi_k : D_r \rightarrow \bR$ is strictly monotone on the arcs
\begin{align*}
\alpha_{-} & = \varphi_k^{-1}(\gamma_k) = \{ z \in D_r \;|\; \Re z = 0 \mbox{ and } \Im z < 0 \} \,,\\
\alpha_{+} & = \varphi_k^{-1}(\gamma_s) = \{ z \in D_r \;|\; \Re z = 0 \mbox{ and } \Im z > 0 \} \,.
\end{align*}

Let us verify that $v_k$ is strictly monotone on the arc
$$
\alpha_0 = \alpha_{-} \cup \{0\} \cup \alpha_{+} = u_k^{-1}(u_k(0)) = \{ z \in D_r \;|\; \Re z = 0 \} \,.
$$

Since $v_k(0) = v(0) = V(p) = 0$ according to our initial assumptions and 0 is the boundary point both for $\alpha_{-}$ and $\alpha_{+}$, then $v_k$ is of fixed sign on each of these two arcs.

So we have two possibilities:
\begin{itemize}
	\item either $v_k$ has the same sign on $\alpha_{-}$ and $\alpha_{+}$, then $v_k |_{\alpha_0}$ has a local extremum in $0$;
	\item or $v_k$ has different signs on $\alpha_{-}$ and $\alpha_{+}$, then $v_k$ is strictly monotone on $\alpha_0$.
\end{itemize}

Suppose that $v_k$ has the same sign on $\alpha_{-}$ and $\alpha_{+}$.

We will assume that $v_k$ is negative both on $\alpha_{-}$ and $\alpha_{+}$. The case when $v_k$ is positive on $\alpha_{-}$ and $\alpha_{+}$ is considered similarly.

Denote $z_1 = 0-i/2 \in \alpha_{-}$, $z_2 = 0+i/2 \in \alpha_{+}$. Let
$$
\textstyle
\hat\varepsilon = \frac{1}{2} \min (|v_k(z_1)|, |v_k(z_2)|) > 0 \,.
$$
From the continuity of $v_k$ it follows that there exists $\hat\delta > 0$ to comply with the following implications
\begin{equation}
\begin{aligned}\label{eq_1}
|z-z_1| < \hat\delta & \Rightarrow |v_k(z) - v_k(z_1)| < \hat\varepsilon \,,\\
|z-z_2| < \hat\delta & \Rightarrow |v_k(z) - v_k(z_2)| < \hat\varepsilon \,,\\
|z| = |z-0| < \hat\delta & \Rightarrow |v_k(z) - v_k(0)| = |v_k(z)| < \hat\varepsilon \,.
\end{aligned}
\end{equation}

Let $c \in (0, \hat\delta)$. Then the point $w_0 = c+i0$ is situated on the curve $\alpha_c$ between points $w_1 = c-i/2$ and $w_2 = c+i/2$. It follows from~\eqref{eq_1} that $v_k(w_1) < -\hat\varepsilon$, $v_k(w_2) < -\hat\varepsilon$ and $v_k(w_0) \in (-\hat\varepsilon, 0)$. But these three correlations can not hold true simultaneously since $v_k$ is strictly monotone on $\alpha_c$ as we already know.

The contradiction obtained shows us that $v_k$ has different signs on $\alpha_{-}$ and $\alpha_{+}$. So, $v_k$ is strictly monotone on $\alpha_0$.

Now, repeating argument from case 1 we find such $\varepsilon_k > 0$ and $\delta_k > 0$ that the set
$$
\textstyle
\hat{W}_k = [0, \delta_k) \times \bigl(-\frac{1}{2}, \frac{1}{2}\bigr)
$$
meets the relations
\begin{equation}\label{eq_2}
\begin{alignedat}{2}
	F \circ T \circ \varphi_k(\hat{W}_k) & \supseteq [0, \delta_k) \times (-\varepsilon_k, \varepsilon_k) \,, & \quad \mbox{if } k & = 2m \,,\\
	F \circ T \circ \varphi_k(\hat{W}_k) & \supseteq (-\delta_k, 0] \times (-\varepsilon_k, \varepsilon_k) \,, & \quad \mbox{if } k & = 2m+1 \,.
\end{alignedat}
\end{equation}

Let us denote $W_k = \varphi_k(\hat{W}_k)$,
$$
W = \bigcup_{k=0}^{2n-1} W_k \,, \quad \delta = \min_{k = 0, \ldots, 2n-1} \delta_k > 0 \,, \quad \varepsilon = \min_{k = 0, \ldots, 2n-1} \varepsilon_k > 0 \,.
$$

\begin{figure}[htbp]
\begin{center}
\includegraphics{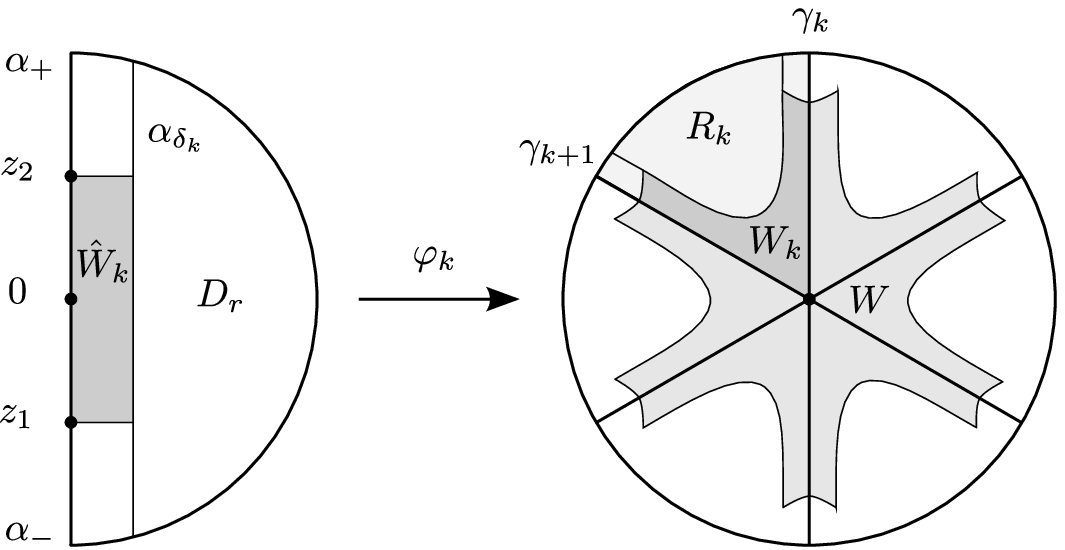}
\caption{}\label{fig_saddle}
\end{center}
\end{figure}

It is easy to show that $W$ is an open neighbourhood of 0 in $D$. From~\eqref{eq_2} and from our initial assumptions it follows that
$$
F(Q) \supseteq F(N) \supseteq F \circ T(W) \supseteq (-\delta, \delta) \times (-\varepsilon, \varepsilon) \,.
$$

\medskip

So, we have proved that for an arbitrary point $p \in M^2$ and its open neighbourhood $Q$ a set $F(Q)$ contains a neigbourhood of $F(p)$. Hence the mapping $F : M^2 \rightarrow \bC$ is open.

At the same time we have shown that an arbitrary $p \in M^2$ is an isolated point of its level set $F^{-1}(F(p))$. It is easy to see now that any level set $F^{-1}(F(p))$ can not contain a nondegenerate continuum.

Consequently, the map $F$ is interior.
\end{proof}

\begin{proof}[Proof of Theorem~\ref{th_conjugace_conditions}]
{\bfseries Necessity.} Let $U, V : M^2 \rightarrow \bR$ be conjugate pseudoharmonic functions on $M^2$ (see Definitions~\ref{defn_conjugate_loc} and~\ref{defn_conjugate_glob}).

Obviously, $V$ is continuous on $M^2$. Suppose that contrary to the statement of Theorem there exists such $c \in \bR$ that $V$ is not open on the level set $\Gamma_c = U^{-1}(c) \subset M^2$, i. e. a map $V_c = V|_{\Gamma_c} : \Gamma_c \rightarrow \bR$ is not open on $\Gamma_c$ in the topology induced from $M^2$.

Let us verify that $V_c$ has a local extremum in some $p \in \Gamma_c$.

Note that the space $\Gamma_c$ is locally arcwise connected, i. e. for every point $a \in \Gamma_c$ and its open neighbourhood $Q$ there exists a neighbourhood $\hat{Q} \subseteq Q$ of $a$ such that every two points $b_1, b_2 \in \hat{Q}$ can be connected by a continuous curve in $Q$. This is a straightforward corollary of the remark subsequent to Definition~\ref{defn_pseudo_harm_loc}.

Since the map $V_c$ is not open by our supposition, then there exists an open subset $O$ of $\Gamma_c$ such that its image $R = V_c(O)$ is not open in $\bR$. Therefore there is a point $d \in R \setminus \Int{R}$. Fix $p \in V_c^{-1}(d) \cap O$.

Let us show that $p$ is a point of local extremum of $V_c$. Fix a neighbourhood $\hat{O} \subseteq O$ of $p$ such that every two points $b_1, b_2 \in \hat{O}$ can be connected by a continuous curve $\beta_{b_1, b_2} : I \rightarrow \Gamma_c$ which meets relations $\beta(0) = b_1$, $\beta(1) = b_2$ and $\beta(I) \subseteq O$. It is clear that an image of a path-connected set under a continuous mapping is path-connected, therefore following inclusions are valid
\begin{eqnarray*}
\bigl(V_c(b_1), V_c(b_2)\bigr) \subset V_c(I) & \mbox{if} & V_c(b_1) < V_c(b_2) \,,\\
\bigl(V_c(b_2), V_c(b_1)\bigr) \subset V_c(I) & \mbox{if} & V_c(b_2) > V_c(b_1) \,.
\end{eqnarray*}
Evidently, $p$ is not an interior point of $V_c(\hat{O})$ since it is not the interior point of $V_c(O)$ by construction and $V_c(\hat{O}) \subseteq V_c(O)$. Then there does not exist a pair of points $b_1, b_2 \in \hat{O}$ such that $V_c(b_1) < V_c(p) < V_c(b_2)$ and either $V(b) \leq V(p)$ for all $b \in \hat{O}$ or $V(b) \geq V(p)$ for all $b \in \hat{O}$, i. e. $p$ is the point of local extremum of $V_c$.

Now, since $V$ is the conjugate pseudo-harmonic function of $U$ in the point $p$ (see Definition~\ref{defn_conjugate_loc}), we can take by definition a neighbourhood $N$ of $p$ in $M^2$ and a homeomorphism $T : D \rightarrow N$ such that a map $f : D \rightarrow \bC$
$$
f(z) = u(z) + i v(z) \,, \quad z \in D
$$
is holomorphic on $D$. Here $u = U \circ T : D \rightarrow \bR$ and $v = V \circ T : D \rightarrow \bR$.

It is clear that without loss of generality we can choose $N$ so small that either $V(b) = V_c(b) \leq V_c(p) = V(p)$ for every $b \in N \cap \Gamma_c$ or $V(b) \geq V(p)$ for all $b \in N \cap \Gamma_c$.

Let for definiteness $p$ is the local maximum of $V_c$ and $V(b) \leq V(p)$ for every $b \in N \cap \Gamma_c$. The case when $p$ is the local minimum of $V_c$ is considered similarly.

On one hand it follows from what we said above that
$$
\bigl(\{U(p)\} \times (V(p), +\infty)\bigr) \cap f(D) = \emptyset
$$
since $u^{-1}(U(p)) = T^{-1}(\Gamma_c \cap N)$ and $v(z) = V(T(z)) \leq V(p)$ for all $z \in T^{-1}(\Gamma_c \cap N)$ by construction. Therefore a point $U(p) + iV(p) = f(T^{-1}(p))$ is not the interior point of a set $f(D)$.

On the other hand it is known that the holomorphic map $f$ is open, so the point $f(T^{-1}(p))$ must be the interior point of the domain $f(D)$.

The contradiction obtained shows that our initial assumption is false and $V$ is open on level sets of $U$.

\medskip

{\bfseries Sufficiency.}
Let $U$ be a pseudo-harmonic function on $M^2$ and a continuous function $V : M^2 \rightarrow \bR$ be open on level sets of $U$.

From Lemma~\ref{lemma_interior_mapping} it follows that the mapping $F : M^2 \rightarrow \bC$, $F(p) = U(p) + iV(p)$, $p \in M^2$ is interior.

Let $p \in M^2$ and $N$ is a simple neighbourhood of $p$ in $M^2$. Then there exists a homeomorphism $T : D \rightarrow N$. It is straightforward that for the open set $N$ a mapping $F_N = F|_N : N \rightarrow \bC$ is interior and its composition $F_N \circ T = F \circ T : D \rightarrow \bC$ with the homeomorfism $T$ is also an interior mapping.

Now from Stoilov theorem it follows that there exists a complex structure on $D$ such that the mapping $F \circ T$ is holomorphic in this complex structure (see~\cite{Stoilov}). But from the uniformization theorem (see~\cite{Forster}) it follows that a simple-connected domain has a unique complex structure. So the mapping $F \circ T$ is holomorphic on $D$ in the standard complex structure. Thus the functions $u = \Re (F \circ T) = U \circ T$ and $v = \Im (F \circ T) = V \circ T$ are conjugate harmonic functions on $D$. Consequently, $V$ is a conjugate pseudo-harmonic function of $U$ in the point $p$.

From arbitrariness in the choise of $p \in M^2$ it follows that $V$ is a conjugate pseudo-harmonic function of $U$ on $M^2$.
\end{proof}

\begin{cor}
Let $U, V : M^2 \rightarrow \bR$ be conjugate pseudoharmonic functions on $M^2$.

Then there exists a complex structure on $M^2$ with respect to which $U$ and $V$ are conjugate harmonic functions on $M^2$.
\end{cor}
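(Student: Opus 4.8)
The plan is to reduce the corollary to the global version of Stoilov's theorem, exactly as the sufficiency half of Theorem~\ref{th_conjugace_conditions} does locally, but now applied on all of $M^2$ at once. First I would check that the hypotheses of Lemma~\ref{lemma_interior_mapping} are met: since $U$ and $V$ are conjugate pseudo-harmonic, $U$ is in particular pseudo-harmonic, and by the necessity part of Theorem~\ref{th_conjugace_conditions} the function $V$ is open on level sets of $U$. Hence Lemma~\ref{lemma_interior_mapping} applies and the mapping $F : M^2 \to \bC$, $F(p) = U(p) + i V(p)$, is interior.

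Next I would invoke Stoilov's theorem directly for the global interior map $F : M^2 \to \bC$, where $\bC$ carries its standard complex structure. Stoilov's theorem then furnishes a complex structure on the source surface $M^2$ with respect to which $F$ is holomorphic. Unlike the proof of sufficiency, here there is no need to pass through a chart $T$ and invoke the uniformization theorem: the target is already a Riemann surface and $F$ is interior on the whole of $M^2$, so the theorem yields a single complex structure on $M^2$ in one step.

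Finally I would unwind the definitions. With this complex structure fixed, $F = U + iV$ is a holomorphic function on the Riemann surface $M^2$, so in every holomorphic chart $U$ and $V$ restrict to the real and imaginary parts of a holomorphic function of one complex variable, i.e. to conjugate harmonic functions in the ordinary sense. This is precisely the assertion that $U$ and $V$ are conjugate harmonic on $M^2$ with respect to the constructed structure.

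The routine steps are the verification of the hypotheses of Lemma~\ref{lemma_interior_mapping} and the translation at the end between holomorphicity of $F$ and the conjugacy of its real and imaginary parts. The only genuine content, and the step I would be most careful about, is the appeal to Stoilov's theorem in its global form: one must confirm that interiority of $F$ on all of $M^2$ (openness together with the absence of nondegenerate continua in the fibers, as in Definition~\ref{defn_interior_mapping}) is exactly the hypothesis under which Stoilov's theorem produces a global complex structure, rather than merely local structures that would then have to be patched together compatibly.
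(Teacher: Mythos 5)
Your proposal is correct and follows essentially the same route as the paper: the paper's proof likewise combines the necessity part of Theorem~\ref{th_conjugace_conditions} (to get that $V$ is open on level sets of $U$), Lemma~\ref{lemma_interior_mapping} (to get that $F = U + iV$ is interior), and the global form of Stoilov's theorem producing a complex structure on $M^2$ in which $F$ is holomorphic. Your explicit remark that no chart-by-chart patching or uniformization argument is needed matches the paper's direct appeal to the global Stoilov theorem.
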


\begin{proof}
This statement follows from Theorem~\ref{th_conjugace_conditions}, Lemma~\ref{lemma_interior_mapping} and the Stoilov theorem which says that there exists a complex structure on $M^2$ such that the interior mapping $F(p) = U(p) + iV(p)$, $p \in M^2$ is holomorphic in this complex structure (see~\cite{Stoilov}).
\end{proof}

\end{document}